\newtheorem{thm}{Theorem}[section]
\newtheorem{prop}[thm]{Proposition}
\newtheorem{defn}[thm]{Definition}
\newtheorem{lem}[thm]{Lemma}
\newcommand {\Z} {\mathbb Z}
\newcommand {\Q} {\mathbb Q}
\newcommand {\ZG} {\mathbb {Z}[G]}
\newcommand {\rk} {{\rm rk}_{\Z}}
\begin{document}

\begin{center} {\bf \huge Duality in the homology of 5-manifolds}

{W.H. Mannan}\end{center}

\begin{abstract} We show that the homological properties of a 5-manifold $M$ with  fundamental group $G$ are encapsulated in a $G$--invariant stable form on the dual of the third syzygy of $\mathbb{Z}$.  In this notation one may express an even stronger version of  Poincar\'e duality for $M$.  However we find an obstruction to this duality.
\end{abstract}

\noindent
{Department of Mathematics, 
City University London,
Northampton Square, 
London, 
EC\textup{1V 0}HB \qquad
{\bf Tel}: 07951825081 \qquad
{\bf email}: {wajid@mannan.info}}

\bigskip
Keywords: {\it Manifold, Poincar\'e duality, Chain complex}

Subject Classification: {57P10,  57M60, 55U15, 55N91, 55N45}

\section{Introduction}   \label{intro}

The purpose of this article is to  consider a 5-manifold $M$ with finite fundamental group $G$, and examine the interaction between the module theory over $\ZG$  with the homology theory of the  simply connected 5-manifold $\tilde M$.  Much of what we do holds more generally for $2n+1$-manifolds with $n-1$-connected universal cover.  However we focus on 5-manifolds for clarity and in order to avoid such additional assumptions, as well as to highlight that low-dimensional phenomena to not impinge on our results.

We associate to $M$ an algebraic complex $C(M)$; the cellular chain complex for $\tilde{M}$ which one may regard as an algebraic complex over $\ZG$.  This algebraic object (defined up to chain homotopy equivalence) is universal in the sense that all twisted homology/cohomology of $M$ may be extracted from it (by taking coefficients in the appropriate module over $\ZG$.

Using purely algebraic methods from our earlier paper \cite{Mann2}, in \S\ref{dualsec} we show that we may take $C(M)$ to satisfy a certain duality, so that its modules and those of its dual $C(M)^*$ may be identified with four of the five differentials the same in both cases.  We call this {\it dual form}.  Note this does not imply $C(M)\cong C(M)^*$.

A self-dual (up to signs on boundary maps) polyhedral cell description for $M$ itself could be obtained from Lefschetz' classical proof of Poincar\'e Duality cf. \cite[Theorem 2.1]{Wall} or via geometric arguments cf \cite[Theorem 6.5]{Klei}.  Our result differs in that we are working with the universal cover of $M$ and modules over $\ZG$.  As mentioned earlier this object more completely represents the (co)homology of $M$.

In \S\ref{Poincaresec} we observe that when $C(M)$ is in dual form, the equivalence induced by Poincar\'e duality may be taken to be plus/minus the identity on four of the six modules.  However some redundancy in the proof of this, together with evidence on the level of the derived category of $\ZG$ on the one hand, and the homology of $\tilde{M}$ on the other, suggest that we may extend this to all six modules. 

 In this case $C(M)$ would satisfy a stronger duality and we say $M$ satisfies {\it anti-self-duality}.  Here the modules in $C(M)$ and $C(M)^*$ are again identified, with four of the five differentials the same, but now the remaining differential differs by a sign in the two cases.  Finally we find an obstruction to anti-self-duality and show that not all 5-manifolds satisfy it, whilst providing a class of examples which do.

{\bf Notation}:  Fix a finite group $G$.  All modules are right modules unless otherwise stated and assumed to be over $\ZG$, as are linear maps between modules.  Bilinear forms on modules are assumed to be $G$ invariant.  We abbreviate finitely generated to fg.

By algebraic complex we mean a sequence of modules and maps (referred to as {\it differentials}) where the composition of successive terms is 0.  The maps will be represented by solid arrows.  Dashed arrows will denote identifications of the kernel/cokernel of the maps at either end with a module.  Dashed arrows should not be regarded as part of the complex (for homotopies between chain maps etc.).

By a \emph{simple homotopy equivalence} we mean interchanging an algebraic complex with one which differs from it in only two consecutive modules, where isomorphic free modules have been added to both modules, and the differential between them has been extended by the isomorphism.  The preceding/following differentials are composed with the natural inclusion/projection respectively.  If two algebraic complexes are related in this way, then we have a projection chain map and an inclusion chain map between them, which form a pair of mutually inverse chain homotopy equivalences.

\begin{defn}\label{stabdef}
For a finite  algebraic complex $\mathcal{A}$, the $n^{\rm th}$ \emph{stabilization} of $\mathcal{A}$, denoted $\mathcal{A}^n$, is the algebraic complex obtained by adding a free rank $n$ module $F$ to the leftmost module in $\mathcal{A}$ (with maps written going from left to right).  The differential from this module is extended to $0$ on $F$.
\end{defn}

We define the dual of a right $\ZG$  module $X$ to be the right $\ZG$ module $X^*$ obtained by taking the left $\ZG$ module ${\rm Hom}_\Z(X,\Z)$ and defining the right action of $g\in G$ to be left multiplication by $g^{-1}$.  

We extend this definition in the natural way to linear maps and algebraic complexes.  Note that dualization preserves exactness at a module in an algebraic complex of torsion free (over $\Z$) fg $\ZG$ modules.  We {do not} introduce signs when we dualize differentials.  The price for this is a sign factor in the statement of Poincar\'e duality (see (\ref{chaindef})).  Define $H^i(\mathcal{A})=H_i(\mathcal{A}^*)$ for an algebraic complex $\mathcal{A}$.

The integers $\Z$ are a module over $\ZG$ with trivial $G$ action.  A bilinear form  $X\times X \to \Z$ will be referred to interchangeably as a map $X \to X^*$.

Let ${\rm TOP}^5$ denote the category of closed connected oriented five dimensional 
topological manifolds with base point, equipped with an identification of their fundamental group with $G$. Let the morphisms in ${\rm TOP}^5$ be the continuous maps which preserve the base point and induce the identity on $G$.

 Given an object $M \in{\rm TOP}^5$ we may find a finite CW complex $M'$ which is homotopy equivalent to it 
\cite[Ann. 2]{Kirb}.  Let $C_*(\tilde{M'})$ be the (cellular) chain complex of $\tilde{M'}$:
\begin{eqnarray*}
C_*(\tilde{M'})\!:=C_5 \stackrel{\partial_5}{\longrightarrow} C_4 \stackrel{\partial_4}{\longrightarrow}C_3 \stackrel{\partial_3}{\longrightarrow}
C_2 \stackrel{\partial_2}{\longrightarrow} C_1 \stackrel{\partial_1}{\longrightarrow}C_0.
\end{eqnarray*}
This is an algebraic complex of fg free $\ZG$ modules and $\ZG$--linear maps.  It is exact at $C_1$ and coker$(\partial_1)\cong \Z$ (as $M$ connected).  We fix this isomorphism by fixing the homology class represented by a point to correspond to $1 \in \Z$.

Poincar\'e duality states that $C_*(\tilde{M'})$ is chain homotopy equivalent to its dual $C_*(\tilde{M'})^*$.  Therefore 
it is exact at $C_4$ and ker$(\partial_5)\cong\Z$.  We fix this isomorphism by demanding the generator of ker$(\partial_5)$ preferred by the orientation, maps to $1\in \Z$.

The Euler characteristic (alternating sum of torsion free ranks over $\Z$)
of $C_*(\tilde{M'})$ is minus that of $C_*(\tilde{M'})^*$ (as dualization preserves the torsion free $\Z$ rank of an fg $\ZG$ module).  However if they are chain homotopy equivalent they must also have equal Euler characteristic,  which must then be $0$.

\begin{defn}\label{algcat}
 Let ${\rm ALG}^5$ denote the category of algebraic 5--complexes of fg free $\ZG$ modules{\rm:} 
\[
\Z\dashrightarrow F_5 \stackrel{\partial_5}{\longrightarrow} F_4 \stackrel{\partial_4}{\longrightarrow}F_3 \stackrel{\partial_3}{\longrightarrow}
F_2 \stackrel{\partial_2}{\longrightarrow} F_1 \stackrel{\partial_1}{\longrightarrow}F_0 \dashrightarrow \Z,
\]

\noindent satisfying exactness at $F_4$ and $F_1$, and with Euler characteristic $0$.  Define the morphisms in this category to be homotopy classes of chain maps.
\end{defn}

We may define a functor $C\colon {\rm TOP}^5 \to {\rm ALG}^5$ by choosing a homotopy equivalence, $h_{M}\colon M \to M'$ with $M'$ a finite CW complex, for each $M \in{\rm TOP}^5$.  In each case fix also $h_M'$ a homotopy inverse of $h_M$.  Then define $C(M)$ to be $C_*(\tilde{M'})$.   

Given a morphism in ${\rm TOP}^5$, $f\colon M_1 \to M_2$, 
we may select a cellular map $f'\colon M_1' \to M_2'$, which is homotopic to $(h_{M_2} \circ f \circ h_{M_1}')$.  Define $C(f)$ to be the equivalence class of the chain map $f'_*\colon C_*(\tilde{M_1')} \to C_*(\tilde{M_2')}$.

The isomorphism class of $C(M)$ in ${\rm ALG}^5$ is an invariant of $M$ as by construction different choices of $M'$ must be
homotopy equivalent to each other.

\section{Dual form}\label{dualsec}

Consider an algebraic 2--complex $\mathcal{B}$ of free fg $\ZG$ modules, where the cokernel of the last map is identified with $\Z$, we have exactness at the middle term and the  dual of the kernel of the first map is denoted $J$:
\[\mathcal{B}:=\qquad J^* \stackrel{\iota}\dashrightarrow F_2\stackrel{d_2}{\to} F_1 \stackrel{d_1}{\to} F_0 \dashrightarrow \Z.\] 
Thus $J^* \in \Omega_3(\Z)$ the third syzygy of $\Z$ \cite[p6]{John1}.  Equip $\mathcal{B}$ with a $G$ invariant bilinear form $\beta$ on $J$.   We can associate to the pair $(\mathcal{B},\beta)$ an algebraic 5--complex:
\begin{eqnarray*}
i(\mathcal{B},\beta):=\qquad \Z\dashrightarrow F_0^* \stackrel{d_1^*}\to F_1^* \stackrel{d_2^*}{\to} F_2^* \,\,\,\stackrel{d_3} \longrightarrow\,\,\,\, F_2 \stackrel{d_2}\to F_1 \stackrel{d_1}\to F_0 \dashrightarrow\Z. \quad\, \\
\iota^*\searrow \quad\,\,\,\,\quad \nearrow \iota \qquad \qquad\qquad\,\,\quad\quad\,\,\,\\
J \stackrel{\beta}\to J^* \qquad \qquad \qquad\,\,\,\,\,\quad\quad\,\,\quad
\end{eqnarray*}
 Let ${\rm DUAL}^2$ denote the category whose objects are pairs $(\mathcal{B},\beta)$ as above.

We define the morphisms of ${\rm DUAL}^2$ to be homotopy equivalence classes of chain maps
between the associated algebraic 5--complexes.  If an object of ${\rm ALG}^5$ is of the form $i(\mathcal{B},\beta)$, for some $(\mathcal{B},\beta)\in {\rm DUAL}^2$ then we say it is in \emph{dual form}.

\begin{thm}\label{algtodual}{The functor $i$ is a natural equivalence of categories}
\end{thm}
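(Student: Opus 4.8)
The plan is to note first that, because the hom-sets of $\mathrm{DUAL}^2$ are \emph{defined} to be the hom-sets of $\mathrm{ALG}^5$ between the associated $5$-complexes, the functor $i$ is full and faithful as soon as it is well defined; so the whole theorem reduces to (a) checking that $i$ really lands in $\mathrm{ALG}^5$ and (b) proving essential surjectivity. For (a) I would verify directly that $i(\mathcal B,\beta)$ is an algebraic complex (the four vanishing conditions come from $d_1d_2=0$, from $\iota$ being the inclusion of $\ker d_2$ so that $d_2\iota=0$ and dually $\iota^*d_2^*=0$, and from $(d_1d_2)^*=0$), that it is exact at $F_1$ and at $F_1^*$ (exactness of $\mathcal B$ and its dual, using that dualising preserves exactness at a term of a complex of $\mathbb Z$-torsion-free fg modules), that its kernel on the left and cokernel on the right are $\mathbb Z$ (the latter is given, the former is dual to it), and that its Euler characteristic is $\mathrm{rk}\,F_0-\mathrm{rk}\,F_1+\mathrm{rk}\,F_2-\mathrm{rk}\,F_2+\mathrm{rk}\,F_1-\mathrm{rk}\,F_0=0$; functoriality is immediate, and, as noted, fullness and faithfulness hold by construction.

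For essential surjectivity, given $\mathcal A:\mathbb Z\dashrightarrow F_5\to F_4\to F_3\xrightarrow{\partial_3}F_2\xrightarrow{\partial_2}F_1\xrightarrow{\partial_1}F_0\dashrightarrow\mathbb Z$, I would take $\mathcal B$ to be its bottom half, so $J^*:=\ker\partial_2$ lies in $\Omega_3(\mathbb Z)$ with inclusion $\iota$, and write $\partial_3=\iota\phi$ using $\partial_2\partial_3=0$. Dualising the bottom half yields the exact complex $\mathcal B^*:\mathbb Z\dashrightarrow F_0^*\xrightarrow{d_1^*}F_1^*\xrightarrow{d_2^*}F_2^*\xrightarrow{\iota^*}J$ with $\iota^*$ the cokernel projection of $d_2^*$, while dualising the top half of $\mathcal A$ — using exactness at $F_4$ and $\ker\partial_5=\mathbb Z$ — produces a second length-$2$ truncated free resolution of $\mathbb Z$ of the same Euler characteristic as $\mathcal B$. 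The idea is then: use the algebraic comparison results of \cite{Mann2} to produce a chain homotopy equivalence $g$ from $\mathcal B^*$ onto the top half of $\mathcal A$ compatible with $\mathrm{id}_{\mathbb Z}$; observe that $\phi g$ (at the degree where it lands $F_2^*\to F_3\to J^*$) kills $\ker\iota^*=\mathrm{im}\,d_2^*$, because $\phi\partial_4=0$, so that $\phi g=\beta\iota^*$ for a unique map $\beta\colon J\to J^*$, i.e. a $G$-invariant form on $J$; and finally check that $g$ together with the identity on the bottom half is a chain homotopy equivalence $i(\mathcal B,\beta)\to\mathcal A$, the connecting differential having been arranged to be exactly $\iota\beta\iota^*$ and the mapping cone being that of $g$ plus a contractible summand.

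The main obstacle is precisely the production of $g$: unlike genuine resolutions, the top half of $\mathcal A$ need not be exact at $F_3$, so one cannot merely quote the comparison theorem, and, more seriously, a chain homotopy equivalence $\mathcal A\simeq i(\mathcal B,\beta)$ need not respect the decomposition into halves, so the third-syzygy modules attached to the top and bottom halves of $\mathcal A$ need not a priori agree with $\ker d_2$. Getting around this is where the substance of \cite{Mann2} enters: one must work up to chain homotopy equivalence of the full $6$-term complex, keep track of the middle homology $H_3(\mathcal A)$ and $H_2(\mathcal A)$ — which are $\mathbb Z$-torsion-free, since $H_1(\mathcal A)=0$ forces this through universal coefficients — and use the uniqueness of algebraic $2$-complexes over $\ZG$ of a given Euler characteristic (equivalently, of third syzygies of $\mathbb Z$ up to the appropriate equivalence) to reconcile the two halves, so that the form $\beta$ extracted above has $\ker\beta\cong H_3(\mathcal A)$ and $\mathrm{coker}\,\beta\cong H_2(\mathcal A)$ and is realised by an honest chain map. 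Once this comparison is in hand, the equivalence of the previous paragraph goes through and essential surjectivity — hence the theorem — follows.
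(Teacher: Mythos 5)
Your reduction of the theorem to essential surjectivity, and your checks that $i$ is well defined, full and faithful, match the paper (which disposes of those points in one line, since the hom-sets of ${\rm DUAL}^2$ are by definition hom-sets between the associated $5$--complexes). The overall shape of your surjectivity argument is also the right one: compare the bottom half of $\mathcal A$ with the dual of its top half, and extract $\beta$ by factoring $\partial_3$ through the third syzygy; your observation that $\chi(\mathcal A)=0$ forces the two halves to have equal Euler characteristic is the correct starting point, and your factorization $\phi g = \beta\iota^*$ via $\phi\partial_4=0$ is sound.

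The gap is the existence of the comparison map $g$. You want a chain homotopy equivalence between the bottom half $F_2\to F_1\to F_0\dashrightarrow\Z$ and the dual of the top half $F_3^*\to F_4^*\to F_5^*\dashrightarrow\Z$ \emph{without stabilizing}, and you propose to get it from ``the uniqueness of algebraic $2$--complexes over $\ZG$ of a given Euler characteristic.'' No such uniqueness is available: equal Euler characteristic only makes the two third syzygies \emph{stably} isomorphic, cancellation can fail for $\ZG$--lattices, and whether equal Euler characteristic forces a chain homotopy equivalence is precisely the delicate question underlying the D(2)--problem literature the paper cites. What \cite{Mann2} actually supplies (Lemma 2.1) is a chain \emph{isomorphism} between the two truncated resolutions \emph{after stabilization by free modules}, and the substance of the paper's proof is arranging that stabilization compatibly with the ambient $5$--complex: three pairs of simple homotopy equivalences add $C_0^*,C_5^*$, then $R_1^*,R_4^*$, then $R_2^*,R_3^*$ in dual positions, so that the degree-$k$ module becomes literally the dual of the degree-$(5-k)$ module; only then do the inverse chain isomorphisms $h,k$ exist, and $d_3:=\delta_3 k_2^*$ yields the form. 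Obtaining an isomorphism rather than a mere equivalence also makes the gluing across the middle square immediate ($d_3h_2^*=\delta_3$), whereas with only a homotopy equivalence $g$ you would still need to verify that your glued map induces isomorphisms on $H_2$ and $H_3$. (A minor aside: your claim that $H_2(\mathcal A)$ is $\Z$--torsion-free is false in general --- consider the Wu manifold --- though nothing essential should rest on it.)
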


By construction $i\colon{\rm DUAL}^2 \to {\rm ALG}^5$ is a full faithful functor.  It remains to show that every object in ${\rm ALG}^5$ is isomorphic to an object in the image of $i$.  Our proof will follow stages analogous to the proof of \cite[Theorem 1.1]{Mann2}.

  Consider an arbitrary element of ${\rm ALG}^5$:
\begin{eqnarray}\label{algtodual1}
\Z \dashrightarrow C_5 \stackrel{\partial_5}{\longrightarrow} C_4 \stackrel{\partial_4}{\longrightarrow}C_3 \stackrel{\partial_3}{\longrightarrow}
C_2 \stackrel{\partial_2}{\longrightarrow} C_1 \stackrel{\partial_1}{\longrightarrow}C_0 \dashrightarrow \Z. 
\end{eqnarray}

 We perform three pairs of simple homotopy equivalences.  Firstly, the complex (\ref{algtodual1}) is chain homotopy equivalent to:
\begin{eqnarray}\label{algtodual2}
C_5 \oplus C_0^*\stackrel{\delta_5}{\longrightarrow} C_4 \oplus C_0^*\stackrel{\partial_4 \oplus 0}{\longrightarrow}C_3 
\stackrel{\partial_3}{\longrightarrow}
C_2 \stackrel{\partial_2}{\longrightarrow} C_1 \oplus C_5^* \stackrel{\delta_1}{\longrightarrow}C_0 \oplus C_5^*,
\end{eqnarray}
where
\[
\delta_1 = \left( \begin{array}{cc} \partial_1&0\\ 0&1 \end{array} \right), \qquad\qquad
\delta_5 = \left( \begin{array}{cc} \partial_5&0\\ 0&1 \end{array} \right).
\]

 Let $R_0=C_0$, $R_5=C_5$, and $R_1 = C_1 \oplus C_5^*$, $R_4 = C_4 \oplus C_0^*$.  Rewrite (\ref{algtodual2}):
\begin{eqnarray}\label{algtodual3}
R_5 \oplus R_0^*\stackrel{\delta_5}{\longrightarrow} R_4 \stackrel{\partial_4 \oplus 0}{\longrightarrow}C_3 
\stackrel{\partial_3}{\longrightarrow}
C_2 \stackrel{\partial_2}{\longrightarrow} R_1 \stackrel{\delta_1}{\longrightarrow}R_0 \oplus R_5^*.
\end{eqnarray}

Again we perform a pair of simple homotopy equivalences.  The complex (\ref{algtodual3}) is chain homotopy equivalent to:
\begin{eqnarray}\label{algtodual4}
R_5 \oplus R_0^*\stackrel{\delta_5}{\longrightarrow} R_4 \oplus R_1^*\stackrel{\delta_4}{\longrightarrow}C_3 \oplus R_1^* 
\stackrel{\partial_3 \oplus 0}{\longrightarrow}
C_2 \oplus R_4^* \stackrel{\delta_2}{\longrightarrow} R_1 \oplus R_4^*\stackrel{\delta_1 \oplus 0}{\longrightarrow}R_0 \oplus R_5^*, 
\end{eqnarray}
where 
\[
\delta_2 = \left( \begin{array}{cc} \partial_2&0\\ 0&1 \end{array} \right), \qquad\qquad
\delta_4 = \left( \begin{array}{cc} \partial_4 \oplus 0 &0\\ 0&1 \end{array} \right).
\]

 Let $R_2 = C_2 \oplus R_4^*$, $R_3 = C_3 \oplus R_1^*$.  Then (\ref{algtodual4}) can be written:
\begin{eqnarray}\label{algtodual5}
R_5 \oplus R_0^*\stackrel{\delta_5}{\longrightarrow} R_4 \oplus R_1^*\stackrel{\delta_4}{\longrightarrow}R_3  
\stackrel{\partial_3 \oplus 0}{\longrightarrow}
R_2 \stackrel{\delta_2}{\longrightarrow} R_1 \oplus R_4^*\stackrel{\delta_1 \oplus 0}{\longrightarrow}R_0 \oplus R_5^*. 
\end{eqnarray}
As all the modules in this complex are free and the Euler characteristic is 0, we can assume the existence of some
isomorphism $\theta\colon R_2^* \to R_3^*$.

We perform a final simple homotopy equivalence to get:
\begin{eqnarray}\label{algtodual6}
R_5 \oplus R_0^*\stackrel{\delta_5}{\longrightarrow} R_4 \oplus R_1^*\stackrel{\delta_4}{\longrightarrow}R_3 \oplus R_2^* 
\stackrel{\delta_3}{\longrightarrow}
R_2 \oplus R_3^*\stackrel{\delta_2 \oplus 0}{\longrightarrow} R_1 \oplus R_4^*\stackrel{\delta_1 \oplus 0}{\longrightarrow}R_0 
\oplus R_5^*,
\end{eqnarray}
where: 
\[
\delta_3 = \left( \begin{array}{cc} \partial_3 \oplus 0&0\\ 0&\theta \end{array} \right). \qquad\qquad
\]

Our initial algebraic complex (\ref{algtodual1})
 is therefore chain homotopy equivalent to (\ref{algtodual6}).  We will show that (\ref{algtodual6}) is chain isomorphic to an algebraic 
5--complex in dual form.

\begin{lem}{There exist a pair of inverse chain isomorphisms $h,k$:}
\begin{eqnarray}\label{endcomplex}
R_2 \oplus R_3^*\stackrel{\delta_2 \oplus 0}{\longrightarrow} R_1 \oplus R_4^*\stackrel{\delta_1 \oplus 0}{\longrightarrow}R_0 
\oplus R_5^* \stackrel{\epsilon \oplus 0}{\dashrightarrow} \Z \to 0\\
h_2\downarrow\uparrow{k_2} \qquad {h_1}\downarrow\uparrow{k_1}\qquad\quad h_0\downarrow\uparrow{k_0} \qquad \vert\vert 1\quad\,\,\, \nonumber\\
\label{otherendcomplexdual}
R_3^* \oplus R_2\stackrel{\delta_4^* \oplus 0}{\longrightarrow} R_4^* \oplus R_1\stackrel{\delta_5^* \oplus 0}{\longrightarrow}R_5^* 
\oplus R_0 \stackrel{\epsilon' \oplus 0}{\dashrightarrow} \Z \to 0
\end{eqnarray}
\end{lem}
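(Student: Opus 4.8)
The plan is to view both rows of the diagram as partial free resolutions of $\Z$ and to build the isomorphisms one module at a time, starting from the right.

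First I would observe that both rows are exact away from the $\Z$-term, with surjective augmentation. The top row is literally the last three maps of (\ref{algtodual6}) together with its augmentation, so exactness at $R_1\oplus R_4^*$, the identification $\mathrm{coker}(\delta_1\oplus 0)\cong\Z$ and surjectivity of that cokernel map are all part of the statement that (\ref{algtodual6}) lies in $\mathrm{ALG}^5$. The bottom row is the $\Z$-dual of the head $R_5\oplus R_0^*\to R_4\oplus R_1^*\to R_3\oplus R_2^*$ of (\ref{algtodual6}), augmented by the map $R_5^*\oplus R_0\to\Z$ identifying $\mathrm{coker}(\delta_5^*)$ with $(\ker\delta_5)^*\cong\Z$; since (\ref{algtodual6}) is exact at $R_4\oplus R_1^*$ with $\ker\delta_5\cong\Z$, and $\Z$-dualization carries an exact sequence of $\Z$-torsion-free finitely generated modules to an exact sequence, the bottom row is exact with surjective augmentation.

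Next I would record the two facts that make a degreewise isomorphism possible. Expanding each $R_i$ into the summands dictated by its definition, the two rows carry the same finitely generated free module in each of the three displayed degrees (the same collection of $C_j$'s and $C_j^*$'s, merely permuted), so there is no obstruction at the level of modules. One also needs the kernels of the two leftmost maps to be isomorphic; both are copies of the third syzygy $J^*$ used to define $\mathcal B$. This is exactly where the three pairs of stabilizations performed in passing from (\ref{algtodual1}) to (\ref{algtodual6}) are used: a priori $\ker(\delta_2\oplus 0)$ and $\ker(\delta_4^*\oplus 0)$ are only stably isomorphic third syzygies, and the free modules adjoined in (\ref{algtodual2})--(\ref{algtodual5}) are precisely what is needed to make them isomorphic, as in the proof of \cite[Theorem 1.1]{Mann2}.

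The main work is to construct $h_0$, then $h_1$, then $h_2$. In each case the relevant commuting square exhibits $h_i$ as a lift, through a surjection onto its image, of a map out of a free module: $h_0$ is a lift of $\epsilon\oplus 0$ across $\epsilon'\oplus 0$ (normalized so that the induced map on $\Z$ is the identity), then $h_1$ is a lift of $h_0\circ(\delta_1\oplus 0)$ across $\delta_5^*\oplus 0$, then $h_2$ is a lift of $h_1\circ(\delta_2\oplus 0)$ across $\delta_4^*\oplus 0$; existence of each lift follows from the exactness established above. The point is to use the freedom in each lift — a $\mathrm{Hom}$ into the kernel of the map one is lifting along — to make it an isomorphism. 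For $h_0$ and $h_1$ this uses the identity blocks that $\delta_1\oplus 0$, $\delta_2\oplus 0$ (and their transposes) inherit from the simple homotopy equivalences, together with the equality of ranks of source and target; for $h_2$ the surjectivity of the lift reduces, because $h_1$ is an isomorphism, to surjectivity of the induced map $\ker(\delta_2\oplus 0)\cong J^*\to J^*\cong\ker(\delta_4^*\oplus 0)$, a stable isomorphism which in the stable range secured by the stabilizations can be corrected to an isomorphism. Finally one takes $k=h^{-1}$ degreewise; the squares for $k$ hold automatically, since the inverse of a chain map is a chain map.

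The genuinely hard part is this last step. A chain homotopy equivalence between finite complexes of free modules with the same modules in each degree need not be homotopic to an isomorphism, so the construction cannot be reduced to a soft statement about projective resolutions; it has to use both the very explicit shape of (\ref{algtodual6}) and the stable range produced by the stabilizations, and it is here that the argument follows \cite{Mann2} most closely.
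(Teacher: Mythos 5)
Your proposal is correct and takes essentially the same route as the paper, whose entire proof of this lemma is the citation of \cite[Lemma 2.1]{Mann2}: your degreewise lifting argument, in which the staircase of adjoined free summands ($R_1=C_1\oplus C_5^*$, $R_4=C_4\oplus C_0^*$, $R_2=C_2\oplus R_4^*$, $R_3=C_3\oplus R_1^*$) supplies exactly the generalized-Schanuel bookkeeping needed to make the top lift an isomorphism on kernels, is precisely the content of that cited lemma. One caution: your phrase ``a stable isomorphism which in the stable range \ldots can be corrected to an isomorphism'' should not be read as an appeal to a general cancellation theorem (cancellation can fail over $\ZG$ for finite $G$) --- the kernels $\ker(\delta_2\oplus 0)$ and $\ker(\delta_4^*\oplus 0)$ are genuinely isomorphic only because the free modules adjoined in passing from (\ref{algtodual1}) to (\ref{algtodual6}) are exactly the modules of the opposite (dualized) resolution, which is the point of the explicit construction in \cite{Mann2}.
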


\begin{proof}
This is just a case of \cite[Lemma 2.1]{Mann2}.
 \end{proof}

 Let $d_3 = \delta_3 k_2^*$ and write (\ref{endcomplex}) as: \[  S_2 \stackrel{d_2}\to S_1 \stackrel{d_1}\to S_0.\]

\begin{lem}{ The complex (\ref{algtodual6}) is chain isomorphic to: 
\[
S_0^* \stackrel{d_1^*}{\longrightarrow} S_1^* \stackrel{d_2^*}{\longrightarrow} S_2^* \stackrel{d_3}{\longrightarrow} 
S_2 \stackrel{d_2}{\longrightarrow} S_1 \stackrel{d_1}{\longrightarrow} S_0. 
\]
}
\end{lem}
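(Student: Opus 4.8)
The plan is to write down an explicit chain isomorphism from (\ref{algtodual6}) to the stated complex, taken to be the identity on the three rightmost terms and assembled from the maps $h,k$ of the preceding Lemma on the three leftmost terms.

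Both complexes are gluings of two three-term complexes along a single differential. Write $\mathcal{T}$ for the tail $S_2\stackrel{d_2}{\to}S_1\stackrel{d_1}{\to}S_0$, which is (\ref{endcomplex}) with its augmentation dropped (so $d_2=\delta_2\oplus 0$ and $d_1=\delta_1\oplus 0$), and $\mathcal{H}$ for the head $R_5\oplus R_0^*\stackrel{\delta_5}{\to}R_4\oplus R_1^*\stackrel{\delta_4}{\to}R_3\oplus R_2^*$. Then (\ref{algtodual6}) is $\mathcal{H}$ glued to $\mathcal{T}$ by $\delta_3$, which runs from the last term of $\mathcal{H}$ to the first term of $\mathcal{T}$; and the stated complex is $\mathcal{T}^*$ glued to $\mathcal{T}$ by $d_3$, since its three leftmost terms $S_0^*\stackrel{d_1^*}{\to}S_1^*\stackrel{d_2^*}{\to}S_2^*$ are exactly the dual $\mathcal{T}^*$. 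So it is enough to produce a chain isomorphism $\mathcal{H}\to\mathcal{T}^*$ which, together with the identity on $\mathcal{T}$, is compatible with the gluing maps $\delta_3$ and $d_3$.

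This is where the preceding Lemma enters. Inspecting the differentials shows that (\ref{otherendcomplexdual}) is exactly the dual $\mathcal{H}^*$ of the head --- the extra $\oplus 0$ summands merely record the inclusions and projections introduced by the simple homotopy equivalences used to reach (\ref{algtodual6}) --- with the two copies of $\Z$ matched up to sign. Thus the Lemma supplies mutually inverse chain isomorphisms $h\colon\mathcal{T}\to\mathcal{H}^*$ and $k\colon\mathcal{H}^*\to\mathcal{T}$. As every module here is fg and $\Z$-torsion-free, hence reflexive, dualizing $h$ gives a chain isomorphism $h^*\colon\mathcal{H}\cong\mathcal{H}^{**}\to\mathcal{T}^*$, with components $h_0^*\colon R_5\oplus R_0^*\to S_0^*$, $h_1^*\colon R_4\oplus R_1^*\to S_1^*$ and $h_2^*\colon R_3\oplus R_2^*\to S_2^*$. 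I would then set $\Phi=(h_0^*,h_1^*,h_2^*,1,1,1)$ and check it is a chain map rung by rung: the two squares among the three rightmost terms commute trivially, the vertical maps being identities and the horizontal maps being literally the same; the two squares among the three leftmost terms commute because they are the duals of the squares expressing that $h$ is a chain map $\mathcal{T}\to\mathcal{H}^*$ (for instance dualizing $h_1 d_2=\delta_4^* h_2$ yields $d_2^* h_1^*=h_2^*\delta_4$); and the single remaining square requires $\delta_3=d_3 h_2^*$, which holds because $d_3$ was \emph{defined} to be $\delta_3 k_2^*$ and $k_2^* h_2^*=(h_2 k_2)^*=1$ as $h_2,k_2$ are mutually inverse. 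Hence $\Phi$ is a chain isomorphism, with inverse $(k_0^*,k_1^*,k_2^*,1,1,1)$.

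Essentially all the work sits in the preceding Lemma; the step I expect to demand the most care is the identification bookkeeping --- matching $X^{**}$ with $X$ consistently, tracking which differential of (\ref{algtodual6}) is dualized at each rung, and confirming that the $\oplus 0$ terms really do make (\ref{otherendcomplexdual}) equal to $\mathcal{H}^*$ and not merely isomorphic to it. The one genuinely new ingredient is the choice $d_3=\delta_3 k_2^*$, engineered precisely so that the central square of $\Phi$ commutes, and that is really the crux.
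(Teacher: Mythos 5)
Your proof is correct and is essentially the paper's own argument: the paper likewise takes the chain isomorphism to be $(h_0^*,h_1^*,h_2^*,1,1,1)$ and verifies only the central square via $d_3h_2^*=\delta_3k_2^*h_2^*=\delta_3$. The only difference is that you spell out the routine commutativity of the remaining squares (by dualizing the squares for $h$) and the identification of (\ref{otherendcomplexdual}) with the dual of the left-hand half of (\ref{algtodual6}), which the paper leaves implicit.
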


 \begin{proof}  We have the following chain isomorphism:
\begin{eqnarray*}
R_5 \oplus R_0^*\stackrel{\delta_5}{\longrightarrow} R_4 \oplus R_1^*\stackrel{\delta_4}{\longrightarrow}R_3 \oplus R_2^* 
\stackrel{\delta_3}{\longrightarrow}
R_2 \oplus R_3^*\stackrel{\delta_2 \oplus 0}{\longrightarrow} R_1 \oplus R_4^*\stackrel{\delta_1 \oplus 0}{\longrightarrow}R_0 
\oplus R_5^* \quad\\
\downarrow h_0^* \qquad \qquad \downarrow h_1^* \,\,\, \quad \qquad \downarrow h_2^* 
\qquad \qquad\downarrow 1 \qquad \qquad\downarrow 1 \qquad \qquad \downarrow 1 \qquad 
\\
\quad S_0^* \quad\stackrel{d_1^*}{\longrightarrow} \quad \quad S_1^* \quad \stackrel{d_2^*}{\longrightarrow} \quad S_2^*  \quad
\stackrel{d_3}{\longrightarrow} \quad\quad
S_2 \quad \stackrel{d_2}{\longrightarrow} \quad S_1 \quad \stackrel{d_1}{\longrightarrow} \quad \quad S_0  \quad\quad
\end{eqnarray*}
We may verify that the central square commutes: $d_3 h_2^* = \delta_3 k_2^* h_2^* = \delta_3$.
 \end{proof}

Clearly $d_3$ is induced by a $G$ invariant bilinear form on the dual of ker$(d_2)$.  This then completes the proof of Theorem \ref{algtodual}.

 Thus when using the functor $C$ to provide an invariant (up to isomorphism) of an element of ${\rm TOP}^5$,  we may work in the category ${\rm DUAL}^2$.  From now on we will suppress the functor $i$ and regard ${\rm DUAL}^2$ as a full subcategory of  ${\rm ALG}^5$.

Note that if $(\mathcal{B},\beta)\in {\rm DUAL}^2$, and $\mathcal{B'}$ is related to $\mathcal{B}$ via a simple homotopy equivalence (recall definition from \S\ref{intro}) then $(\mathcal{B},\beta)$ is related to $(\mathcal{B}',\beta)$ by a pair of simple homotopy equivalences.

Recall Definition  \ref{stabdef}.  Then $(\mathcal{B},\beta)$ and $(\mathcal{B}^n,\beta)$ are related by a simple homotopy equivalence, (where in the latter $\beta$ is understood to extend to the standard inner product on the free module which we regard as a direct sum of copies of $\ZG$). 

In some sense $\mathcal{B}$ is not important.  That is we may fix an algebraic 2--complex $\mathcal{B}$,  so that up to isomorphism any element of ${\rm DUAL}^2$ has the form $(\mathcal{B}^n,\beta)$ for some integer $n$ and bilinear form $\beta$.  To that end fix an algebraic 2--complex:
\[
\mathcal{B} :=\quad  J^* \stackrel{}{\dashrightarrow} F_2 \stackrel{d_2}{\to} F_1 \stackrel{d_1}{\to} F_0 \dashrightarrow \Z,\]
with $F_0,F_1,F_2$ fg free and exactness at $F_1$.  Let $J_n$ denote $J \oplus \ZG^n$.

\begin{thm}\label{4.1}Given $(\mathcal{C},\gamma)\in {\rm DUAL}^2$ there exists an integer $n$ and bilinear form $\beta$ on $J_n$ such that $(\mathcal{C},\gamma)$ is chain homotopy equivalent to $(\mathcal{B}^n,\beta)$.
\end{thm}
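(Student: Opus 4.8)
The plan is to exhibit both $\mathcal B$ and $\mathcal C$ as degree $\le 2$ truncations of free resolutions of $\Z$, use the (stabilised) uniqueness of such truncations to produce a chain homotopy equivalence $\mathcal C^a\to\mathcal B^n$ of $2$--complexes, and then transport $\gamma$ along the isomorphism this induces on second homology, checking that the whole construction lifts to a chain homotopy equivalence of the associated $5$--complexes.

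Write $\mathcal C$ as $K^*\dashrightarrow P_2\xrightarrow{c_2}P_1\xrightarrow{c_1}P_0\dashrightarrow\Z$, with $K^*=\ker c_2$, exactness at $P_1$, cokernel $\Z$ at $P_0$, and $\gamma$ a $G$--invariant bilinear form on $K=(K^*)^*$. Splicing a free resolution of $K^*$ onto $\mathcal C$ realises it as a degree $\le 2$ truncation of a free resolution of $\Z$, and likewise for $\mathcal B$ with $J^*$; in particular $K^*$ and $J^*$ both represent $\Omega_3(\Z)$. By iterated use of Schanuel's Lemma there are integers $a,n\ge 0$ and a $\ZG$--isomorphism $K^*\oplus\ZG^a\xrightarrow{\ \cong\ }J^*\oplus\ZG^n$; comparing $\Z$--ranks (and using $H_1=0$, $H_0=\Z$ for both complexes) shows that $\mathcal C^a$ and $\mathcal B^n$ then have the same Euler characteristic, while $H_2(\mathcal C^a)=K^*\oplus\ZG^a$ and $H_2(\mathcal B^n)=J^*\oplus\ZG^n=J_n^*$. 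Thus $\mathcal C^a$ and $\mathcal B^n$ are degree $\le 2$ truncations of free resolutions of $\Z$ with isomorphic $H_2$ and equal Euler characteristic, so by the stabilised comparison argument of the kind used in \cite[Theorem 1.1]{Mann2} — after enlarging $a$ and $n$ by a common integer if necessary, which by the remarks preceding the theorem changes neither complex up to simple homotopy equivalence — there is a chain homotopy equivalence $f\colon\mathcal C^a\to\mathcal B^n$ with a chosen homotopy inverse $g$. These induce mutually inverse $\ZG$--isomorphisms $f_*\colon K^*\oplus\ZG^a\xrightarrow{\cong}J_n^*$ and $g_*=(f_*)^{-1}$ on $H_2$.

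Extend $\gamma$ to the form $\gamma':=\gamma\oplus\langle-,-\rangle$ on $K\oplus\ZG^a$, with $\langle-,-\rangle$ the standard inner product on the free summand; by the remark preceding the theorem $(\mathcal C,\gamma)$ is simple homotopy equivalent to $(\mathcal C^a,\gamma')$. Define $\beta$ on $J_n$ to be the transport of $\gamma'$ along $f_*$; as maps to the dual, $\beta=f_*\,\gamma'\,(f_*)^*\colon J_n\to J_n^*$. Then $\beta$ is $G$--invariant, being a composite of $\ZG$--linear maps, and it inherits from $\gamma'$ whatever nondegeneracy is required, so $(\mathcal B^n,\beta)\in{\rm DUAL}^2$. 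A routine check — using that $f$ carries $\ker c_2$ into $\ker d_2$ inducing $f_*$ there, together with the dual identity for $g$ — shows that $f$ on the degrees $\le 2$ half and $g^*$ on the degrees $\ge 3$ half assemble into a chain map $i(\mathcal C^a,\gamma')\to i(\mathcal B^n,\beta)$, compatible with the central differentials $\iota\gamma'\iota^*$ and $\iota\beta\iota^*$ precisely because $\beta=f_*\gamma'(f_*)^*$; and this chain map is a chain homotopy equivalence, with inverse built symmetrically from $g$ and $f^*$. Hence $(\mathcal C^a,\gamma')\simeq(\mathcal B^n,\beta)$, and composing with the stabilisation gives $(\mathcal C,\gamma)\simeq(\mathcal B^n,\beta)$, as required.

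I expect the main obstacle to be the step which promotes the stable isomorphism $K^*\oplus\ZG^a\cong J_n^*$ of syzygy modules to an honest chain homotopy equivalence of the truncated $2$--complexes realising it, with the right control of Euler characteristics, and then the verification that such an equivalence lifts to the associated $5$--complexes intertwining the two bilinear forms. Transporting the form itself is formal once $\beta$ is defined by push-forward along $f_*$; the real content is the stabilised comparison argument, which is why we appeal to the analogue of \cite[Theorem 1.1]{Mann2}.
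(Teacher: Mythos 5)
Your proposal is correct and follows essentially the same route as the paper: both reduce to the stable comparison result of \cite{Mann2} for the underlying algebraic $2$--complexes and then transport the middle differential along the induced identification of $H_2$ (the paper via $D_3=\theta_2\circ\partial_3\circ\theta_2^*$, you via $\beta=f_*\,\gamma'\,(f_*)^*$). The only substantive difference is that the paper uses \cite[Lemma 2.1]{Mann2} to obtain an honest chain \emph{isomorphism} after simple homotopy equivalences, so that gluing $\theta$ with $\theta^{*-1}$ immediately gives an isomorphism of $5$--complexes, whereas your gluing of $f$ with $g^*$ is a priori only a chain map and the quasi-isomorphism check in the two middle degrees (which does go through, since $H_2$ and $H_3$ identify with ${\rm coker}$ and $\ker$ of the forms and $f_*\gamma'(f_*)^*$ matches these up) is genuinely needed.
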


\begin{proof}  By \cite[Lemma 2.1]{Mann2} there exists an integer $n$ such that we may apply a pair of simple homotopy equivalences to $\mathcal{B}^n$ to get a complex{\rm :}
\[L_2 \stackrel{D_2}{\longrightarrow} L_1  \stackrel{D_1}{\longrightarrow} L_0,\]
and a pair of simple homotopy equivalences to some stabilization of $\mathcal{C}$ to get{\rm:}  
\[S_2 \stackrel{\partial_2 }{\longrightarrow}  S_1 \stackrel{\partial_1}{\longrightarrow}S_0,\]
such that there is  a chain isomorphism:
\begin{eqnarray*}
S_2 \stackrel{\partial_2 }{\longrightarrow}  S_1 \stackrel{\partial_1}{\longrightarrow}S_0\,\,\\ 
\downarrow \theta_2    \quad      \downarrow \theta_1 \quad   \downarrow \theta_0\\
L_2\stackrel{D_2}{\longrightarrow}  L_1 \stackrel{D_1}{\longrightarrow}L_0\,\,
\end{eqnarray*}
As noted above, $(\mathcal{C},\gamma)$ is chain homotopy equivalent to:
\[
S_0^* \stackrel{\partial_1^*}{\longrightarrow}  S_1^*  \stackrel{\partial_2^*}{\longrightarrow} S_2^* 
\stackrel{\partial_3}{\longrightarrow} S_2   \stackrel{\partial_2}{\longrightarrow} S_1  \stackrel{\partial_1}
{\longrightarrow} S_0, 
\]
for some map $\partial_3$.  Let $D_3 = \theta_2 \circ \partial_3 \circ \theta_2^*$.  We have a chain isomorphism:
\begin{eqnarray}
S_0^* \stackrel{\partial_1^*}{\longrightarrow}  S_1^*  \stackrel{\partial_2^*}{\longrightarrow}\,\,\, S_2^* \,\,\,
\stackrel{\partial_3}{\longrightarrow} \,\, S_2 \,\,\,  \stackrel{\partial_2}{\longrightarrow}\,\,\, S_1  \stackrel{\partial_1}
{\longrightarrow} S_0 \,\,\,\,\\ 
\quad \downarrow \theta_0^{*-1}\,\,\, \downarrow \theta_1^{*-1}
\,\,\, \downarrow \theta_2^{*-1} \qquad   \downarrow \theta_2    \quad \quad      \downarrow \theta_1 \quad   \downarrow \theta_0\nonumber\\
L_0^* \stackrel{D_1^*}{\longrightarrow}  \,\,L_1^*  \stackrel{D_2^*}{\longrightarrow}\,\, L_2^* \,\,\,
\stackrel{D_3}{\longrightarrow} \,\, L_2 \,\,\,   \stackrel{D_2}{\longrightarrow} \,\,\,L_1  \stackrel{D_1}
{\longrightarrow} L_0 
\,\,\,\,\label{lowerfive}
\end{eqnarray}
Finally note that (\ref{lowerfive}) is obtained from $(\mathcal{B}^n,\, \beta)$ for some $\beta$, by two pairs of simple homotopy equivalences.  Hence $(\mathcal{B}^n,\, \beta)$ is chain homotopy equivalent to $(\mathcal{C},\gamma)$.
 \end{proof}

So with $\mathcal{B}$ fixed, for any $M \in  {\rm TOP}^5$ we have an integer $n$ and bilinear form $\beta$ on $J_n$, such that $C(M)$ is isomorphic to $(\mathcal{B}^n,\beta)$ in ${\rm ALG}^5$.  In particular the form $\beta$ encapsulates all the twisted (co)homology of $M$.  A benefit of this is that one may attempt to classify objects in ${\rm ALG}^5$ and ${\rm TOP}^5$ without having to classify algebraic 2--complexes over $\ZG$ (an abstruse problem in its own right \cite{John1}).

\section{Anti-self-duality}\label{Poincaresec}

Pick $\eta\in H_5(C(M))$ a generator.  Poincar\'e Duality may be stated:  {\it There is a  chain homotopy equivalence $\phi\colon C(M)^* \to C(M)$, such that for $\alpha \in H^p(C(M))$ we have}{\rm:} \begin{eqnarray}\phi_*(\alpha) =-1^{p(p+1)/2} \eta \,\,\,\widehat{}\,\,\, \alpha. \label{chaindef}\end{eqnarray}

Given $\alpha \in H^p(C(M))$, $\gamma \in H^{5-p}(C(M))$ the graded symmetry of the cup product implies that $\gamma(\eta \,\,\,\widehat{}\,\,\, \alpha)=\alpha(\eta \,\,\,\widehat{}\,\,\, \gamma)$.  Substituting (\ref{chaindef}) into this we get: \begin{eqnarray} \label{chaindef1} \gamma(\phi_*(\alpha)) =- \alpha(\phi_*(\gamma)).\end{eqnarray}

Suppose, that we have a chain homotopy equivalence $f\colon C(M) \to \mathcal{A}$, for some $\mathcal{A}\in{\rm ALG}^5$.  We have a chain homotopy equivalence $\phi'=f \circ \phi \circ f^*\colon \mathcal{A}^* \to \mathcal{A}$.  

\begin{defn} If a homotopy equivalence $\mathcal{A}^* \to \mathcal{A}$ is chain homotopy equivalent to one constructed in this
way for some $f$, we say it is a \emph{duality equivalence}.
\end{defn}

Given $\alpha \in H^p(\mathcal{A})$, $\gamma \in H^{5-p}(\mathcal{A})$ we have $\gamma(\phi'_*(\alpha)) = f^*(\gamma)(\phi_*f^*(\alpha))$ which is antisymmetric in $\alpha,\gamma$ (by (\ref{chaindef1})), so we may conclude
$\gamma(\phi'_*(\alpha)) = -\alpha(\phi'_*(\gamma))$.

By Theorem \ref{algtodual} we may choose $\mathcal{A}$ to be of the form $(\mathcal{B},\beta)$ for some: 
\[\mathcal{B}= J^* \stackrel{\iota}\dashrightarrow F_2 \stackrel{d_2} {\to}F_1 \stackrel{d_1}{\to} F_0 \dashrightarrow Z,\] and bilinear form $\beta\colon  J \times J \to \Z$.  Let $d_3=\iota\beta\iota^*$.  

The duality equivalence $\phi'\colon(\mathcal{B},\beta)^*\to (\mathcal{B},\beta)$ may be written: 
\begin{eqnarray}
\Z \dashrightarrow F_0^* \stackrel{d_1^*} {\to}F_1^* \stackrel{d_2^*}{\to} F_2^* \stackrel{d_3^*}{\to}
 F_2 \stackrel{d_2} {\to}F_1 \stackrel{d_1}{\to} F_0 \dashrightarrow \Z\,\,\,\notag\\
y \downarrow \,\,\,\,\,\,\, \phi'_5\downarrow
\,\,\,\,\phi'_4\downarrow \,\,\,\,\phi'_3\downarrow\,\,\, \phi'_2\downarrow \,\,\,
\phi'_1\downarrow\,\,\, \phi'_0 \downarrow\quad\,  x\downarrow \,\,\, \label{dequiv1}\\
\Z \dashrightarrow F_0^* \stackrel{d_1^*} {\to}F_1^* \stackrel{d_2^*}{\to} F_2^* \stackrel{d_3}{\to}
 F_2 \stackrel{d_2} {\to}F_1 \stackrel{d_1}{\to} F_0 \dashrightarrow \Z\,\,\,\notag
\end{eqnarray}
The natural pairing $F_0^* \times F_0 \to \Z$ induces a pairing on the copies of $\Z$ on the left of this diagram, with those on the right.  We set the generators $1\in \Z$ on the right and left so that this pairing is given by multiplication.  Let $\alpha\in H^5((\mathcal{B},\beta)),\, \gamma \in H^0((\mathcal{B},\beta))$ correspond to these generators.

We have $x=\gamma(\phi'_*(\alpha)) = -\alpha(\phi'_*(\gamma))=-y$.  Without loss of generality we may assume (by for example replacing $d_3$ with $-d_3$), that $x=1, y=-1$.

\begin{thm} \label{Almost1s} For some maps $\theta_1$, $\theta_2$ we have a duality equivalence{\rm:}
\begin{eqnarray} 
F_0^* \stackrel{d_1^*}{\longrightarrow} F_1^* \stackrel{d_2^*}{\longrightarrow} F_2^* \stackrel{d_3^*}{\longrightarrow} 
F_2 \stackrel{d_2}{\longrightarrow} F_1 \stackrel{d_1}{\longrightarrow} F_0\,\,\,\,\, \nonumber\\
\downarrow -1  \quad\downarrow -1 \,\,\,\,\,\,\downarrow \theta_2 \,\,\,\quad \downarrow \theta_1 \,\,\quad\downarrow 1
\qquad\downarrow 1 \label{duality4outof6}\\
F_0^* \stackrel{d_1^*}{\longrightarrow} F_1^* \stackrel{d_2^*}{\longrightarrow} F_2^* \stackrel{d_3}{\longrightarrow} 
F_2 \stackrel{d_2}{\longrightarrow} F_1 \stackrel{d_1}{\longrightarrow} F_0\,\,\,\,\, \nonumber
\end{eqnarray}
\end{thm}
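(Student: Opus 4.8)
The plan is to start from the duality equivalence $\phi'$ displayed in (\ref{dequiv1}), already normalised so that $x=1$ and $y=-1$, and to move it into the shape (\ref{duality4outof6}) by two chain homotopies: one that tidies up the right‑hand (augmentation) end, and a dual one that tidies up the left‑hand end. Since being a duality equivalence is a property of the chain homotopy class, the chain map we end up with will still be a duality equivalence; all that must be checked is that the two homotopies achieve, degree by degree, the normalisations claimed, and that they do not interfere with one another.

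For the right‑hand end, note that $x=1$ says exactly that $\phi'_0$ induces the identity on $\operatorname{coker}(d_1)\cong\Z$, so $\phi'_0-1$ has image in $\operatorname{im}(d_1)$; as $F_0$ is free it lifts through $d_1$. Correcting $\phi'$ by the resulting chain homotopy makes the degree‑$0$ component equal to $1$, and the new degree‑$1$ component then has image in $\ker(d_1)=\operatorname{im}(d_2)$ by exactness at $F_1$, so freeness of $F_1$ lets us absorb it too. This is nothing but the classical fact that a chain map of the free resolution tail $F_2\to F_1\to F_0\dashrightarrow\Z$ covering $\mathrm{id}_\Z$ is homotopic to the identity there. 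The homotopy is supported in degrees $0,1$, so it does nothing in degrees $3,4,5$; it replaces the degree‑$2$ component by some new map, which will be the $\theta_1$ of the statement.

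Next I would run the same argument dualised at the left‑hand end, observing that a chain homotopy whose only nonzero components emanate from degrees $3$ and $4$ does nothing in degrees $0,1,2$, so the work just done survives. The previous step left $\phi'_5$ untouched, so it still induces $y=-1$ on $\ker(d_1^*)\cong\Z$; hence its dual $\phi'^{*}_5$ induces $-1$ on $\operatorname{coker}(d_1)$, whence $\phi'^{*}_5+1$ has image in $\operatorname{im}(d_1)$ and, $F_0$ being free, lifts through $d_1$ to some $u\colon F_0\to F_1$. Then $u^{*}\colon F_1^{*}\to F_0^{*}$ satisfies $u^{*}d_1^{*}=\phi'_5+1$, which is precisely the homotopy term needed to turn the degree‑$5$ component into $-1$. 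The degree‑$4$ correction goes through the same way: the chain‑map identity $\phi'_4 d_1^{*}=d_1^{*}\phi'_5$ forces the residual map in degree $4$ to vanish on $\operatorname{im}(d_1^{*})=\ker(d_2^{*})$, so dualising and lifting through $d_2$ (using freeness of $F_1$ and exactness at $F_1$) supplies the last homotopy term and produces the degree‑$3$ component $\theta_2$. The resulting chain map is exactly (\ref{duality4outof6}), and it is a duality equivalence since it is chain homotopic to $\phi'$.

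The substance of the proof is therefore just the bookkeeping — verifying the effect of each homotopy in each degree and checking the mutual non‑interference of the two ends — so I expect the only real (and minor) obstacle to be pinning down the index conventions for the dual complex so that the chain‑map and duality identities come out with the right variances and signs. Conceptually the whole argument is the ``lift the identity over a free resolution'' device applied twice, once directly at the augmentation end and once dualised at the other end; it is precisely this duplication — the ``redundancy'' alluded to in the introduction — that suggests one might also be able to normalise the two middle components to $\pm1$ (anti‑self‑duality), and the reason the same manoeuvre cannot be pushed into the middle is the obstruction exhibited later in the paper.
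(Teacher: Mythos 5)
Your proposal is correct and follows essentially the same route as the paper: the paper likewise lifts the identity over the free tail $F_2\to F_1\to F_0\dashrightarrow\Z$ to produce homotopy terms $I_0,I_1$ normalising $\phi'_0,\phi'_1$ to $1$, dualises the identical argument to produce $I_3,I_4$ normalising $\phi'_4,\phi'_5$ to $-1$, sets $I_2=0$ so the two ends do not interact, and absorbs the residues $I_1d_2$ and $d_2^*I_3$ into $\theta_1$ and $\theta_2$. Your observation about the non-interference of the two homotopies is exactly the paper's choice $I_2=0$.
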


\begin{proof}
By the projectivity of the $F_i$ we may define maps $I_0,I_1,I_3,I_4$: 

\xymatrix{
J^*\ar[r]\ar[d]&F_2\ar[d]_{1-\phi_2'}\ar[r]^{d_2}&F_1\ar[dl]_{I_1}\ar[d]_{1}^{-\phi_1'}\ar[r]^{d_1}&F_0\ar[dl]^{I_0}\ar[d]^{1-\phi_0'}\ar[r]&\Z\ar[d]^0&
J^*\ar[d]\ar[r]&F_2\ar[d]_{-1-\phi_3'^*}\ar[r]^{d_2}&F_1\ar[dl]_{I_3^*}\ar[d]_{-1}^{-\phi_4'^*}\ar[r]^{d_1}&F_0\ar[dl]^{I_4^*}\ar[d]^{-1-\phi_5'^*}\ar[r]&\Z\ar[d]^0
\\
J^*\ar[r]&F_2\ar[r]^{d_2}&F_1\ar[r]^{d_1}&F_0\ar[r]&\Z&
J^*\ar[r]&F_2\ar[r]^{d_2}&F_1\ar[r]^{d_1}&F_0\ar[r]&\Z
}

\noindent such that:
\begin{eqnarray*}
d_1 I_0 =1- \phi_0'&,& \qquad I_0 d_1+d_2 I_1 =1- \phi_1',\\
d_1 I_4^* =-1- \phi_5'^*&,& \qquad I_4^* d_1+d_2 I_3^* =-1- \phi_4'^*.
\end{eqnarray*}
Set    \begin{eqnarray}\theta_1= \phi_2'+I_1d_2,\qquad \theta_2= \phi_3'+ d_2^*I_3,\qquad I_2=0.\label{I2is0}\end{eqnarray}
Then the $I_i$, $i=0,1,2,3,4$ form the required chain homotopy (\ref {dequiv1}) to (\ref{duality4outof6}).
 \end{proof}

Theorem \ref{algtodual} tells us that we may always  pick a representative $\mathcal{A}=(\mathcal{B},\beta)$ of the homotopy type of $C(M)$ such that the modules of $\mathcal{A}$ and $\mathcal{A}^*$ may be identified, with four of the five differentials  identical.  Theorem  \ref{Almost1s} tells us that Poincar\'e duality induces a homotopy equivalence $\mathcal{A}^*\to\mathcal{A}$ which is plus/minus the identity on four of the six maps.  The natural completion  of these results would be that we could also take $\theta_1=1,\,\theta_2=-1$ (which would imply $d_3^*=-d_3$).

\begin{defn}
Say that $M \in {\rm TOP}^5$ is \emph{anti-self-dual} if we can pick $\mathcal{A}$ as above with  $\theta_1=1,\,\theta_2=-1$.
\end{defn}

Indeed in (\ref{I2is0}) we arbitrarily set $I_2=0$, so it may seem that a more careful choice of $I_2$ could lead to $\theta_1=1,\,\theta_2=-1$.  There are two further reasons to conjecture that all $M\in {\rm TOP}^5$ are anti-self dual.   These arise in the two themes which we seek to marry in this paper: the derived category of the fundamental group of $M$ and the homology of the  simply connected manifold $\tilde{M}$. 

Let $\lambda_1\colon J \to J$ be the dual of the map induced by $\theta_1$ and let $\lambda_2\colon J \to J$ be the map induced by $\theta_2$.  Commutativity in the central square of (\ref{duality4outof6}) implies that for all $u, v \in J$: $\beta(\lambda_1 u,  v)= \beta(\lambda_2v, u).$

Thus if we had $\lambda_1=1_J,\,\lambda_2=-1_J$ then $\beta$ would be antisymmetric.  What we do know from Theorem \ref{Almost1s} is that this is true at level of the derived category of $G$ (in the sense of \cite[Chapter 4]{John1});  $\lambda_1,\lambda_2$ augment to $1,-1 \in \Z/\vert G \vert$ respectively.   

On the other hand Tor$(H^3(\tilde{M}))$ may be naturally identified with a subquotient of $J\otimes \Q$.  With respect to this identification, the linking form on  Tor$(H^3(\tilde{M}))$ (taking values in $\Q/\Z$) is induced by $\beta(\lambda_1\_,\,\_)$ \cite{Bard,Wall1}.  From {\rm \cite[Lemma D(ii)]{Bard}}  we know that this is antisymmetric.

Despite all this, it transpires that not all manifolds are anti-self-dual.  An obstruction arises from a combination of a condition on $G$ and a condition on $H_*(\tilde{M})$.

\begin{thm}\label{obstruct}
Let $M \in {\rm TOP}^5$. If $G$ has even order and $H_3(\tilde{M})$ has even rank over $\Z$, then $M$ is not anti-self-dual.
\end{thm}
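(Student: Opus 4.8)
The plan is to derive a contradiction from the existence of a duality equivalence with $\theta_1 = 1$, $\theta_2 = -1$ by extracting a numerical invariant modulo $2$. If $M$ is anti-self-dual we may work with a representative $\mathcal{A} = (\mathcal{B},\beta)$ where $\beta$ is antisymmetric (as noted in the paragraph before the statement, $\theta_1 = 1_J$, $\theta_2 = -1_J$ forces $\beta(u,v) = -\beta(v,u)$ for all $u,v \in J$). Recall $J^* \in \Omega_3(\Z)$, so $J$ is $\Z$-torsion free and finitely generated; write $r = \rk(J)$. Since $\mathcal{A}$ has Euler characteristic $0$ and its modules are free, one computes $r \equiv$ (something determined by the ranks of $F_0, F_1, F_2$) and more importantly $r$ is congruent mod $2$ to a quantity read off from $H_*(\tilde M)$: specifically from the exact sequence $0 \to \ker d_2 \to F_2 \to F_1 \to F_0 \to \Z \to 0$ and the identification of $\mathrm{Tor}(H^3(\tilde M))$ as a subquotient of $J \otimes \Q$, one relates $\rk(J)$ to $\rk H_3(\tilde M)$ plus a correction coming from $H_2, H_1$ and the free ranks; the hypothesis that $\rk H_3(\tilde M)$ is even should be arranged to force $r$ to have a definite parity.

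Next I would exploit the interplay between the antisymmetric form $\beta$ and the group $G$ of even order. The key point: a $G$-invariant antisymmetric $\Z$-valued bilinear form on a $\Z$-torsion free $\ZG$-module $J$ which is nondegenerate after $\otimes \Q$ (nondegeneracy coming from Poincaré duality, i.e.\ $d_3 = \iota\beta\iota^*$ being a homotopy equivalence in the relevant sense) has rank $\rk(J)$ forced to be even — but only up to a correction term involving the fixed subspace behaviour over $\Q[G]$, or rather involving the $2$-torsion in the coinvariants. The cleanest route is to pass to a well-chosen quotient: reduce mod $2$ and use that $G$ has even order means $\mathbb{Z}/2[G]$ has a nontrivial radical, so the induced form on $J \otimes \mathbb{Z}/2$ (or on a suitable quotient by the radical action) is an alternating form in the strict sense, hence its nondegenerate part has even rank; combined with an independent parity computation of $\dim_{\mathbb{Z}/2}(J \otimes \mathbb{Z}/2)$ one gets that anti-self-duality forces some parity that contradicts the one forced by $\rk H_3(\tilde M)$ even.

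Concretely the steps in order are: (1) assume $M$ is anti-self-dual and fix $\mathcal{A} = (\mathcal{B}^n, \beta)$ with $\beta$ antisymmetric and $G$-invariant on $J = J_n$; (2) establish that over $\Q$ (or after inverting $|G|$, or mod a prime not dividing $|G|$) the form $\beta$ is nondegenerate, using that $d_3$ participates in a homotopy equivalence $\mathcal{A}^* \to \mathcal{A}$ so $H^i(\mathcal{A}) \otimes \Q \cong H_i(\mathcal{A}) \otimes \Q$ and tracking what this says about $\iota\beta\iota^*$; (3) compute $\rk(J) \pmod 2$ in terms of $\rk H_3(\tilde M) \pmod 2$ and the free/other ranks, using the long exact sequences and the Euler characteristic $0$ condition; (4) separately, use $|G|$ even plus $G$-invariance and antisymmetry to pin down $\rk(J) \pmod 2$ — this is where a genuinely antisymmetric (not merely skew) structure on an appropriate $\mathbb{Z}/2$-reduction gives evenness of a rank that a naive count contradicts; (5) compare (3) and (4) to reach a contradiction.

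The main obstacle I expect is step (4): distinguishing \emph{antisymmetric} ($\beta(u,v) = -\beta(v,u)$) from \emph{alternating} ($\beta(u,u) = 0$) matters precisely in characteristic $2$, and over $\mathbb{Z}$ an antisymmetric form can have a nontrivial "diagonal" obstruction. The hypothesis that $|G|$ is even is surely there to kill this: one wants to argue that $G$-invariance together with an element of order $2$ (or the even order) forces $\beta(u,u)$ to lie in $2\Z$ for a spanning set, or forces the relevant mod-$2$ form to be alternating on a subspace of known codimension. Pinning down exactly which subspace, and checking its dimension has the right parity relative to $\rk H_3(\tilde M)$, is the delicate computational heart of the argument; everything else is bookkeeping with exact sequences and the constructions of Section~\ref{dualsec}.
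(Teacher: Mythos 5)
Your overall strategy --- derive a parity contradiction from the antisymmetry of $\beta$ --- is the right one, but both of the key inputs are misidentified, and as written the argument would not close. First, the parity of $\rk(J)$ does not come from Euler characteristics of $\mathcal{A}$ or from long exact sequences relating it to $H_*(\tilde M)$: it comes from the fact that $J^* \in \Omega_3(\Z)$, so that $0 \to J^* \to F_2 \to F_1 \to F_0 \to \Z \to 0$ is exact and $\rk(J) = \rk(J^*) \equiv -1 \pmod{|G|}$. This is the \emph{only} place the hypothesis that $|G|$ is even enters: it forces $\rk(J)$ to be odd. Your guess that the even-order hypothesis is there to control the antisymmetric-versus-alternating distinction in characteristic $2$ is wrong, and your step (4) --- reducing mod $2$, invoking the radical of $\Z/2[G]$, and worrying about $\beta(u,u)$ --- is a detour the paper never takes and that you would struggle to complete.

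Second, the way the evenness of $\rk H_3(\tilde M)$ enters is via the clean identification of $H_3(\tilde M)$ with the kernel of $\beta$ (you gesture instead at ${\rm Tor}(H^3(\tilde M))$ as a subquotient of $J \otimes \Q$, which is the linking-form discussion and not what is needed here). Granting that identification, one splits $J = H_3(\tilde M) \oplus J'$ over $\Z$ and observes that $\beta$ descends to a \emph{non-degenerate} antisymmetric form on $V = J' \otimes \Q$, a $\Q$-vector space of dimension $\rk(J) - \rk H_3(\tilde M)$, which is odd $-$ even $=$ odd. A non-degenerate antisymmetric bilinear form on an odd-dimensional vector space over $\Q$ does not exist, and that is the whole contradiction; no characteristic-$2$ analysis, no $G$-invariance beyond the syzygy rank computation, and no nondegeneracy argument via homotopy equivalences of $d_3$ is required. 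You should replace steps (2)--(4) of your outline with these two facts: $\rk(J) \equiv -1 \pmod{|G|}$, and $\ker\beta \cong H_3(\tilde M)$.
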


\begin{proof}
Suppose $M$ is anti-self-dual.  Then we have that $C(M)$ is chain homotopy equivalent to $(\mathcal{B},\beta)$, with  $\beta$ an antisymmetric form on a module $J$ with $J^*\in \Omega_3(\Z)$.  Thus $\rk(J) \equiv -1 \mod |G|$, and in particular is odd.
Now $H_3(\tilde{M})$ may be identified naturally with the kernel of $\beta$.  

Thus we may split (over $\Z$) $J=H_3(\tilde{M})\oplus J'$ and extend $\beta$ to the tensor product $V=J' \otimes \Q$.  Then $V$ is an odd dimensional vector space over $\Q$, equipped with a non-degenerate anti-symmetric bilinear form.  This is impossible.
 \end{proof}

As an example let $M$ be the Lens space $L(n;1,1)$.  This has fundamental group  $C_n=\langle t \vert\,\, t^n=1\rangle$.  Let $\Sigma$ denote the sum of the $t^i$.  We have $\mathcal{A}$ in dual form, with a chain homotopy equivalence $C(M)\to \mathcal{A}$ yielding the duality morphism $\phi$:

\xymatrix{
\mathcal{A^*}\!:=\ar[d]_\phi&\Z[t]\ar[d]_{-1} \ar[r]^{\times(1-t^{-1})} &\Z[t]\ar[d]_{-1} \ar[r]^{\times \Sigma}&\Z[t]\ar[d]_{-t} \ar[r]^{\times(1-t)}&
\Z[t]\ar[d]_{1}\ar[r]^{\times \Sigma}& \Z[t]\ar[d]_{1} \ar[r]^{\times(1-t)}&\Z[t]\ar[d]_{1}
\\
\mathcal{A}\,:=&\Z[t] \ar[r]^{\times(1-t^{-1})} &\Z[t] \ar[r]^{\times \Sigma}&\Z[t] \ar[r]^{\times(1-t^{-1})}&
\Z[t]\ar[r]^{\times \Sigma}& \Z[t] \ar[r]^{\times(1-t)}&\Z[t]
}

Note that dualizing multiplication by an element of $\Z[t]$ corresponds to involuting the element (e.g. substituting $t^{-1}$ for $t$). This results in the change in sign between the components of $\phi$ on the left and right sides, implied by Theorem  \ref{Almost1s}.  

Note that the fourth component (from the right) of $\phi$ is $-t$, not $-1$.  If $n$ is even then this is unavoidable; $\tilde{M}=S^5$ so we know $H_3(\tilde{M})=0$ and Theorem  \ref{obstruct} implies that $M$ is not anti-self-dual.  Conversely:

\begin{prop}
If $n=4k+1$ then $M$ is anti-self-dual.
\end{prop}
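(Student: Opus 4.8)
The plan is to replace the explicit complex $\mathcal{A}$ of the diagram by a chain--isomorphic complex $\mathcal{A}'$ in dual form whose middle differential is an honestly antisymmetric form, and then to push the duality morphism $\phi$ through this isomorphism and bring it, via Theorem~\ref{Almost1s}, into the shape \eqref{duality4outof6} with $\theta_1=1$ and $\theta_2=-1$.

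Write $\bar x$ for the involute of $x\in\Z[t]$, so $\bar t=t^{-1}$. Using that $n$ is odd (which holds as $n=4k+1$), set $s:=t^{(n+1)/2}$, a square root of $t$; thus $s^2=t$, $\bar s=t^{(n-1)/2}$, $t\bar s=s$, and the augmentation $\epsilon(s)=1$. With $m=\tfrac{n-1}{2}$ put $r:=s-\bar s=t^{m+1}-t^m$. Then $\bar r=-r$, so $\times r$ is antisymmetric, and $\Sigma r=r\Sigma=0$; hence the complex $\mathcal{A}'$ obtained from the bottom row of the diagram by changing the middle map from $\times(1-t^{-1})$ to $\times r$ (all other differentials unchanged) is again in dual form, $\mathcal{A}'=i(\mathcal{B},\beta')$ with $\beta'$ antisymmetric. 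I would then write down the chain isomorphism $F\colon\mathcal{A}\to\mathcal{A}'$ given by multiplication by $s$ on the three right--hand copies of $\Z[t]$ and by the identity on the three left--hand ones; all commuting squares are trivial except the central one, where
\[
(\times r)\circ 1=(\times s)\circ(\times(1-t^{-1}))\qquad\text{since } s(1-t^{-1})=t^{m+1}-t^m=r .
\]
Because $\epsilon(s)=1$, $F$ respects the identifications with $\Z$ at both ends, so it is an isomorphism in ${\rm ALG}^5$; thus $\mathcal{A}'$ represents the homotopy type of $C(M)$ and inherits the duality equivalence $\phi':=F\phi F^{*}$.

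The next step is to compute $\phi'$. Since dualising a multiplication involutes the element, $F^{*}$ is multiplication by $\bar s$ on the three left--hand copies and the identity on the right; using $t\bar s=s$ one gets
\[
\phi'=\bigl(\,\times(-\bar s),\ \times(-\bar s),\ \times(-s),\ \times s,\ \times s,\ \times s\,\bigr)=\bigl(\,\times(-t^{m}),\ \times(-t^{m}),\ \times(-s),\ \times s,\ \times s,\ \times s\,\bigr).
\]
This induces $-1$ on $H_5$ and $+1$ on $H_0$, so the normalisation of Theorem~\ref{Almost1s} applies directly. Finally I would run that normalisation with $I_1=I_3=0$ and, writing $P:=1+t+\dots+t^{m}$ (so $\bar P=t^{-m}P$),
\[
I_0=\times(\pm P),\qquad I_2=\times\bar P,\qquad I_4=\times(1-P).
\]
Each of the six chain--homotopy identities (and triviality on the two copies of $\Z$) then reduces to telescoping: the key identities are $P(1-t)=1-t^{m+1}=1-s$, its involute, $\bar P\,r=P(t-1)=s-1$, and $(1-P)(1-t^{-1})=1-t^{m}=1-\bar s$. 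Feeding these into the expressions for $\theta_1$ and $\theta_2$ furnished by Theorem~\ref{Almost1s} (now with the correction terms $d_3'I_2$ and $I_2 (d_3')^{*}$ coming from the nonzero $I_2$) gives $\theta_1=\times\!\bigl(s-(s-1)\bigr)=1$ and $\theta_2=\times\!\bigl(-s+(s-1)\bigr)=-1$. Hence $\mathcal{A}'$ is a dual--form representative of $C(M)$ for which Poincar\'e duality takes the form \eqref{duality4outof6} with $\theta_1=1$, $\theta_2=-1$, i.e.\ $M$ is anti--self--dual.

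The step I expect to be most delicate is the last one: one must exhibit a \emph{single} chain homotopy $(I_0,I_1,I_2,I_3,I_4)$ satisfying all six homotopy relations at once, not merely the two that read off $\theta_1$ and $\theta_2$, while keeping the signs and the composition order straight, and one must check that no orientation has been reversed along the way (this is where the arithmetic of $n$, in particular the parity of $(n+1)/2$, would make itself felt if it mattered). Everything else is formal once $\mathcal{A}'$ and $F$ are in place.
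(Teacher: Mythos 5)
Your proof is correct, and it reaches the conclusion by a genuinely different mechanism than the paper's. The paper keeps the chain isomorphism $\mathcal{A}\to\mathcal{A}'$ supported on a \emph{single} module: it constructs a unit $\beta=\sum_{r=-k+1}^{k}t^r-\sum_{r=k+2}^{3k}t^r$ of $\Z[C_n]$ satisfying $\Sigma\beta=\Sigma$ and $\beta(1-t^{-1})=\alpha$ with $\alpha=t^{k+1}+t^k-t^{-k}-t^{-(k+1)}$ antisymmetric, and the verification that $\beta$ is a unit (via $\alpha(t^{1+k}+t^{1-k})=t^2-1$) is exactly where $n=4k+1$ enters; the final homotopy then has only $I_2$ nonzero. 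You instead use the group element $s=t^{(n+1)/2}$, a square root of $t$, spread over the three right-hand modules, which turns the middle differential into the visibly antisymmetric $r=s-\bar s$ at no arithmetic cost beyond $n$ being odd; the price is a homotopy with three nonzero components, but all the identities you list ($P(1-t)=1-s$, $\bar P r=s-1$, $(1-P)(1-t^{-1})=1-\bar s$) are correct and I have checked that the five homotopy relations close up with $I_0=-P$ (the sign you left undetermined), $I_1=I_3=0$, $I_2=\bar P$, $I_4=1-P$; the computation of $\phi'=F\phi F^*=(-\bar s,-\bar s,-s,s,s,s)$ and the preservation of the normalisation $x=1$, $y=-1$ (since $\epsilon(s)=1$ and $\bar s\Sigma=\Sigma$) are also right. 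What each approach buys: the paper's single-module isomorphism produces a one-term homotopy and exhibits a nontrivial unit of $\Z[C_{4k+1}]$, whereas your argument is more elementary and, notably, nowhere uses $n\equiv 1\pmod 4$ --- it goes through verbatim for every odd $n$ (I checked $n=3$ explicitly), so it actually proves a stronger statement than the one asserted. That is consistent with Theorem \ref{obstruct}, whose obstruction only bites when $|G|$ is even, but you should flag the strengthening rather than leave the hypothesis $n=4k+1$ doing no work.
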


\begin{proof}
\[\,\,\,\,{\rm Let} \qquad 
\alpha=t^{k+1}+t^k-t^{-k}-t^{-(k+1)},\qquad \beta =\sum_{r=-k+1}^k t^r -\sum_{r=k+2}^{3k}t^r.
\]
Then $\alpha$ generates the same ideal as $t-1$, as $\alpha(t^{1+k}+t^{1-k})=t^2-1$, which when multiplied by $1+t^2+t^4+\cdots+t^{4k}$ returns $t-1$.

Now $\beta(1-t^{-1})=\alpha$ and $\Sigma\beta=\Sigma$.  Thus $\gamma\beta(1-t^{-1})=(1-t^{-1})$ for some $\gamma$, and $\gamma\beta=1+s\Sigma$ for some integer $s$.  Then $(\gamma-s\Sigma)\beta=1$ and $\beta$ is a unit. Thus we have a chain homotopy equivalence $f\colon \mathcal{A}\to \mathcal{A'}$:

\xymatrix{
\mathcal{A}\,:=\ar[d]_f&\Z[t]\ar[d]_{1} \ar[r]^{\times(1-t^{-1})} &\Z[t]\ar[d]_{1} \ar[r]^{\times \Sigma}&\Z[t]\ar[d]_{1} \ar[r]^{\times(1-t^{-1})}&
\Z[t]\ar[d]_{\beta}\ar[r]^{\times \Sigma}& \Z[t]\ar[d]_{1} \ar[r]^{\times(1-t)}&\Z[t]\ar[d]_{1}
\\
\mathcal{A'}\,:=&\Z[t] \ar[r]^{\times(1-t^{-1})} &\Z[t] \ar[r]^{\times \Sigma}&\Z[t] \ar[r]^{\times\alpha}&
\Z[t]\ar[r]^{\times \Sigma}& \Z[t] \ar[r]^{\times(1-t)}&\Z[t]
}

The chain homotopy equivalence $f\phi f^*$ is chain homotopic to the chain map whose first three components are 1 and whose last three components are -1.  The required homotopy has only one non-zero component; $I_2=x$, where $\alpha x=\beta-1$.
 \end{proof}

\end{document}